\nonstopmode \numberwithin{equation}{section}
\newtheorem{thm}{Theorem}[section]
\newtheorem{lem}{Lemma}[section]
\theoremstyle{definition}
\newtheorem{definition}{Definition}[section]
\newtheorem{example}{Example}[section]
\newtheorem{rem}{Remark}[section]
\newcounter{minutes}\setcounter{minutes}{\time}
\newcounter{hours}\setcounter{hours}{\time}
\newcounter {own}
\def\theown {\thesection       .\arabic{own}}
\newcounter{alphabet}
\begin{document}
	
	\title{On the pre-Schwarzian norm and starlikeness of certain logharmonic mappings}

	\author{Sushil Pandit}
	\address{Sushil Pandit,
		Department of Mathematics,
		National Institute of Technology Durgapur,
		Near Mahatma Gandhi Avenue,
		Durgapur-713209, West Bengal, India.}
	\email{sushilpandit15594@gmail.com}

	\subjclass[2010]{Primary 30C45, 30C55}
	\keywords{logharmonic mapping; analytic functions; pre-Schwarzian norm; growth theorem; logharmonic hereditarily starlike functions.}
	
	\def\thefootnote{}
	\footnotetext{ {\tiny File:~\jobname.tex,
			printed: \number\year-\number\month-\number\day,
			          \thehours.\ifnum\theminutes<10{0}\fi\theminutes }
		} \makeatletter\def\thefootnote{\@arabic\c@footnote}\makeatother

	\begin{abstract}
		In this note, we consider certain logharmonic mappings in the unit disk $\mathbb{D}=\{z\in\mathbb{C}:|z|<1\}.$ Next, we obtain sharp bound of pre-Schwarzian norm of such logharmonic mappings in the unit disk. Then we discuss growth theorem for the mappings. Moreover, we discuss starlikeness of logharmonic mappings and compute sufficient coefficient condition of hereditarily starlikeness. At the end, we present some example of logharmonic hereditarily starlike function.
	\end{abstract}
	
	\thanks{}
	
	\maketitle
	\pagestyle{myheadings}
	\markboth{Sushil Pandit}{On the pre-Schwarzian norm and starlikeness of certain logharmonic mappings}

	\section{Introduction}

	Let $\mathcal{S}$ be the class of all analytic and univalent functions $h$ defined in the unit disk $\mathbb{D}=\{z\in\mathbb{C}:|z|<1\}$ with normalization $h(0)=h'(0)-1=0.$ Let $\mathcal{R}$ be class of functions $h\in\mathcal{S}$ such that ${\rm Re\,}h'(z)>0$ for $z\in\mathbb{D}.$ In 1962, Macgregor \cite{Macgregor-1962} studied the class $\mathcal{R}$. It is well known that every analytic function $h\in\mathcal{R}$ is close-to-convex. A complex valued mapping $f$ is called logharmonic if it is a solution of the nonlinear elliptic partial differential equation
	\begin{align}\label{log-000020}
		\frac{\overline{f_{\overline{z}}}}{\overline{f}}=\omega\frac{f_z}{f}
	\end{align}
	where $\omega$ is analytic self mapping of the unit disk $\mathbb{D}$ which is also known as second dilatation or simply dilatation of $f.$ The Jacobian of $f$ is given by $J_f=|f_z|^2-|f_{\overline{z}}|^2=|f_z|^2(1-|\omega|^2).$ We note that the dilatation $\omega$ satisfies $|\omega|<1$ and so $J_f>0.$ Thus every non constant logharmonic mapping $f$ is always sense-preserving. If $f$ is non-constant logharmonic mapping in $\mathbb{D}$ and vanishes only at $z = 0,$ then $f$ has the representation
	\begin{align*}
		f(z) = z^m|z|^{2\beta m}h(z)\overline{g(z)},
	\end{align*}
	where $m$ is non-negative integer, ${\rm Re\,}(\beta)>-1/2$ and $h$ and $g$ are analytic functions in $\mathbb{D}$ such that $g(0)=1$ and $h(0)\neq 0$ (see \cite{Abdulhadi-1988,Abdulhadi-Ali-2012}). Authors in \cite{Abdulhadi-1988} have shown that if $f$ is a nonvanishing logharmonic mapping, then it can be expressed as
	\begin{align}\label{log-000040}
		f(z) = h(z)\overline{g(z)},
	\end{align}
	where $h$ and $g$ are nonvanishing analytic functions in the unit disk $\mathbb{D}.$ Further, if the mapping $f$ given by \eqref{log-000040} is locally univalent and sense-preserving, then $h'g \neq 0$ in $\mathbb{D}$ and the dilatation $\omega$ is given by
	\begin{align}\label{p2-010}
		\omega=(g'h)/(gh').
	\end{align}
	There are several fundamental results on logharmonic mappings defined on the unit disk $\mathbb{D}$ (see \cite{Abdulhadi-Ali-2012, Liu-Ponnusamy-2018}). It is easy to see that if $f=h\overline{g}$ is a nonvanishing logharmonic mapping defined in $\mathbb{D}$, then the function $\log{f}=
	\log{h}+\overline{\log{g}}$ is harmonic in $\mathbb{D},$ and its corresponding dilatations are the same. A twice continuously differentiable complex valued function $f$ in a domain $\Omega$ is called harmonic if it satisfies the Laplace equation $\Delta f = 4f_{z\overline{z}} = 0.$ The quantity $\omega=\overline{f_{\overline{z}}}/f_{z}$ is called dilatation of the harmonic mapping $f$. In a simply connected domain $\Omega,$ every harmonic mapping $f$ has a canonical representation of the form $f = h+\overline{g},$ where $h$ and $g$ are analytic functions in  $\Omega$ called the analytic and co-analytic part of $f$ respectively. Let $\mathcal{H}$ denotes the class of locally univalent sense-preserving harmonic mappings $f=h+\overline{g}$ in the unit disk $\mathbb{D}$ such that $h(0)=h'(0)-1=g(0)=0.$ This kind of mappings have been well discussed in \cite{Clunie-Small-1984, Duren-2004}.

	\subsection{Bloch Functions:}
An analytic function $h$  defined in $\mathbb{D}$ is called a Bloch function (see \cite{Pommerenke-1970}) if $\beta_h=\sup_{z\in\mathbb{D}}(1-|z|^2)|h'(z)|<\infty.$	The collection $\mathcal{B}$ of analytic Bloch functions in $\mathbb{D}$ form a Banach space with the norm given by
	\begin{align*}
		||h||_{\mathcal{B}}=|h(0)|+\sup_{z\in\mathbb{D}}(1-|z|^2)|h'(z)|.
	\end{align*}
	A harmonic mapping $f=h+\overline{g}\in\mathcal{H}$ is called harmonic Bloch if	$\beta_f=\sup_{z\in\mathbb{D}}(1-|z|^2)(|h'(z)|+|g'(z)|)<\infty.$
	For more information about harmonic Bloch mapping, we refer \cite{Chen-Gauthier-Hengartner-2000, Colonna-1989}. Similarly, a nonvanishing logharmonic mapping $f(z) = h(z)\overline{g(z)}$ in $\mathbb{D}$ is called logharmonic Bloch if
	\begin{align}\label{p2-015}
		\beta_f=\sup_{z\in\mathbb{D}}(1-|z|^2)\left\{\left|\frac{h'(z)}{h(z)}\right|+\left|\frac{g'(z)}{g(z)}\right|\right\}<\infty.
	\end{align}
	The space $\mathcal{B}_{Lh}$ of all logharmonic Bloch functions forms a complex Banach space with the norm given by $	||f||_{\mathcal{B}_{Lh}}=|f(0)|+\beta_f$ (see \cite{Liu-Ponnusamy-2018}).

	\subsection{Pre-Schwarzian norm:}
	For a locally univalent analytic function $h$ defined in $\mathbb{D}$, the pre-Schwarzian norm $\|P_h\|$ is defined by
	\begin{align}\label{p2-025}
		||P_h|| = \sup_{z \in \mathbb{D}}(1-|z|^2)|P_h(z)|,
	\end{align}
	where $P_h(z)=\frac{h''(z)}{h'(z)}$	is the pre-Schwarzian derivative of $h$. Various valuable univalence criteria for an analytic function $h$ were obtained using pre-Schwarzian derivative of $h$. It has been shown in \cite{Kruas-1932} that $||P_h||\leq 6$ for a univalent function $h.$ Conversely, for a locally univalent function $h$ if $||P_h||\leq 1$, then the function $h$ is univalent in $\mathbb{D}$ (see \cite{Becker-1972}, \cite{Becker-Pommerenke-1984}). In 1976, Yamashita \cite{Yamashita-1976} proved that $||P_h||$ is finite if and only if $h$ is uniformly locally univalent in $\mathbb{D},$ that is, there exists a constant $\rho>0$ such that $h$ is univalent on the hyperbolic disk $|(z-b)/(1-\overline{b}z)|<\tanh\rho$ of radius $\rho$ for every $b\in\mathbb{D}.$ In 2015, Hern{\'a}ndez and Mart{\'i}n \cite{Hernandez-Martin-2015} proposed the definitions of the pre-Schwarzian derivative and the Schwarzian derivative of a locally univalent harmonic mapping of the form $f=h+\overline{g}.$ For more information about the pre-Schwarzian and Schwarzian derivatives of harmonic mappings, we refer to the articles, \cite{Hernandez-Martin-2015, Chuaqui-Hernandez-Martin-2017, Liu-Ponnusamy--2018, Ali-Pandit-2023, Wang-Li-Fan-2024}.

	For a locally univalent logharmonic mapping $f$ of the form \eqref{log-000040}, Bravo et al. \cite{Bravo-Hernandez-Ponnusamy-Venegas-2022} defined the pre-Schwarzian derivative $P_f$ as
	\begin{align}\label{p2-040}
		P_f=\left(\log(J_f)\right)_z =\frac{h''}{h'}+\frac{g'}{g}-\frac{\overline{\omega}\omega'}{1-|\omega|^2}.
	\end{align}
	where $J_f$ is the  Jacobian and $\omega$ is the dilatation of the function $f.$ Further, the pre-Schwarzian norm $||P_f||$ is defined by 
	$$||P_f|| = \sup\limits_{z \in \mathbb{D}}(1-|z|^2)|P_f(z)|.$$
	If $f$ is a sense-preserving logharmonic mapping of the form \eqref{log-000040} and $\phi$ is a locally univalent analytic function for which the composition $f\circ\phi$ is well defined, then the function $f\circ\phi$ is again a sense-preserving logharmonic mapping and the pre-Schwarzian derivative of it is given by
	\begin{align*}
		P_{f\circ\phi}=P_f(\phi)\phi'+P_\phi.
	\end{align*}
	Authors in \cite{Bravo-Hernandez-Ponnusamy-Venegas-2022} have studied various properties of the pre-Schwarzian norm of a sense-preserving logharmonic mapping. In this article, we obtain pre-Schwarzian norm estimate of certain logharmonic mappings of the form \eqref{log-000040}.\\

	\section{The Class $L_{\mathcal{R}}$}
	In this section, we introduce a class $L_{\mathcal{R}}$ of locally univalent logharmonic mappings $f=H\overline{G}\in C^1(\mathbb{D})$ (i.e., continuously differentiable functions on $\mathbb{D}$), where $H=e^h$ such that $h\in\mathcal{R},$ having dilatation $\omega=(G'H)/(GH')=G'/(Gh'):\mathbb{D}\rightarrow\mathbb{D}.$ Here, we note that every $f\in L_{\mathcal{R}}$ is non vanishing in $\mathbb{D}.$ It is quite clear that if $f=e^h\overline{G}\in L_{\mathcal{R}}$ then there exists some analytic self mapping $\epsilon$ of $\mathbb{D}$ with $\epsilon(0)=0$ such that
	$$h'=\frac{1+\epsilon}{1-\epsilon}.$$
	From this it follows that
	\begin{align*}
		\beta_f= & \sup\limits_{z\in\mathbb{D}}(1-|z|^2)\left[|h'(z)|+\left|\frac{G'(z)}{G(z)}\right|\right]\\
		=  & \sup\limits_{z\in\mathbb{D}}(1-|z|^2)(1+|\omega(z)|)|h'(z)|\\
		\le & 2\sup\limits_{z\in\mathbb{D}}(1-|z|^2)\frac{1+|z|}{1-|z|}\\
		= & 8,
	\end{align*}
	that is, $\beta_f$ is finite and so $f$ is logharmonic Bloch mapping. Moreover, for $f=e^{h}\overline{G}\in L_{\mathcal{R}}$ having dilatation $\omega=G'/(Gh')$, the function $\log{f}=h+\overline{\log{G}}$ is harmonic as well as harmonic Bloch with same dilatation $\omega$ because
	$$\beta_{\log{f}}=\sup\limits_{z\in\mathbb{D}}(1-|z|^2)\left|(1+\omega(z))h'(z)\right|=\beta_f.$$

	It is well known that the pre-Schwarzian norm of an univalent analytic or an univalent harmonic mapping  is finite. The same is not true in the case of logharmonic mapping. For example, the logharmonic koebe mapping $K=H\overline{G}$ where
	\begin{align*}
		H(z)=\frac{z}{1-z}\exp{\left(\frac{2z}{1-z}\right)}\quad\text{and}\quad G(z)=(1-z)\exp{\left(\frac{2z}{1-z}\right)}
	\end{align*}
	with dilatation $z$ is univalent but does not have finite $\|P_K\|$ (see \cite{Bravo-Hernandez-Ponnusamy-Venegas-2022}). Thus there arises a natural question that under what condition an univalent logharmonic mapping has finite pre-Schwarzian norm. Ali and Pandit in \cite{Ali-Pandit-2024} obtained various necessary and sufficient conditions of the finiteness of pre-Schwarzian norm of logharmonic mappings of the form \eqref{log-000040}. 
	Here, we observe that every $f=e^{h}\overline{G}\in L_{\mathcal{R}}$ is logharmonic Bloch mapping and the pre-Schwarzian norm of $e^{h},$ where $h\in\mathcal{R},$ is finite. Thus from \cite[Theorem 3.4]{Ali-Pandit-2024} it follows that $f$ has finite pre-Schwarzian norm. Our following result provides exact bound of the pre-Schwarzian norm $\|P_f\|$ for $f\in L_{\mathcal{R}}.$

	\begin{thm}\label{log-000100}
		Let $f\in L_{\mathcal{R}}$ be a sense-preserving logharmonic mapping in the unit disk $\mathbb{D}.$ Then the pre-Schwarzian norm $\|P_f\|\le 11.$ The estimate is sharp.
	\end{thm}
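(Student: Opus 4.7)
The plan is to expand $P_f$ via the formula (\ref{p2-040}), bound the three resulting summands by Schwarz--Pick type estimates, and then combine them by the triangle inequality. Sharpness will require a careful choice of $h$ and the dilatation so that all three summands align in phase.

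With $H = e^h$ we have $H''/H' = h''/h' + h'$, and the dilatation relation for $L_\mathcal{R}$ gives $G'/G = \omega h'$, so
\begin{equation*}
	P_f = \frac{h''}{h'} + (1+\omega)h' - \frac{\overline{\omega}\,\omega'}{1-|\omega|^2}.
\end{equation*}
I would bound each term multiplied by $(1-|z|^2)$: (i) writing $h'(z) = (1+\epsilon(z))/(1-\epsilon(z))$ for some Schwarz function $\epsilon$, a short calculation gives $h''/h' = 2\epsilon'/(1-\epsilon^2)$; the Schwarz--Pick inequality for $\epsilon$ together with $|1-\epsilon^2| \ge 1-|\epsilon|^2$ then yields $(1-|z|^2)|h''/h'| \le 2$; (ii) the classical distortion $|h'(z)| \le (1+|z|)/(1-|z|)$ (valid because ${\rm Re\,}h' > 0$) combined with $|1+\omega| \le 1+|\omega| < 2$ gives $(1-|z|^2)|(1+\omega)h'| \le 2(1+|z|)^2 \le 8$; and (iii) the Schwarz--Pick inequality for $\omega$ yields $(1-|z|^2)|\bar\omega\omega'|/(1-|\omega|^2) \le |\omega(z)| \le 1$. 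The triangle inequality then delivers $\|P_f\| \le 2+8+1 = 11$.

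For sharpness, I would take $h_0(z) = -z - 2\log(1-z)$, so that $h_0'(z) = (1+z)/(1-z)$ saturates the estimates (i) and (ii) along the real axis. The delicate step is the dilatation: the naive choice $\omega(z) = z$ gives only $(1-r^2)|P_f(r)| \to 9$, because the third summand, $-\bar z/(1-|z|^2)$, opposes the first two on the positive real axis. I would instead use the family of M\"obius dilatations $\omega_a(z) = -(z-a)/(1-a z)$ with $a\in (0,1)$; for $r \in (0,a)$ this gives $\omega_a(r) = (a-r)/(1-ar) > 0$ and $\omega_a'(r) < 0$, so the third summand also becomes positive real and cooperates with the other two. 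Using that $\omega_a$ saturates Schwarz--Pick, a direct computation along the path $a = 1-\eta^2,\ r = 1-\eta$ with $\eta \to 0^+$ shows $(1-r^2)|P_{f_a}(r)| \to 2+8+1 = 11$, proving sharpness. The main obstacle is precisely this phase-alignment: recovering the full constant $11$ forces one to choose a dilatation whose derivative points opposite to the dilatation value at the relevant boundary-approaching points.
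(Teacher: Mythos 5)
Your proposal is correct and follows essentially the same route as the paper: the same decomposition $P_f = h''/h' + (1+\omega)h' - \overline{\omega}\omega'/(1-|\omega|^2)$ with the same term-by-term bounds $2+8+1$, and the same extremal data for sharpness, since your $\omega_a(z)=-(z-a)/(1-az)$ is exactly the paper's $\omega_t(z)=(t-z)/(1-tz)$ paired with $h(z)=-z-2\log(1-z)$. The only (immaterial) difference is that you reach the value $11$ along a diagonal path $a=1-\eta^2$, $r=1-\eta$, whereas the paper uses iterated limits $t\to1$ then $r\to1$; both correctly establish that the constant cannot be lowered.
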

	\begin{proof}
		The pre-Schwarzian derivative of a logharmonic mapping $f\in L_{\mathcal{R}}$ of the form $H(z)\overline{G(z)}=e^{h(z)}\overline{G(z)}$ with dilatation $\omega=G'/(Gh')$ is
		\begin{align*}
			P_f(z)=P_H(z)+\omega(z)\frac{H'(z)}{H(z)}-\frac{\overline{\omega(z)}\omega'(z)}{1-|\omega(z)|^2}.
		\end{align*}
		From hypothesis, it follows that $H(z)=e^{h(z)}$ where $h\in\mathcal{R}.$ Thus for some analytic function $\epsilon:\mathbb{D}\rightarrow\mathbb{D}$ with $\epsilon(0)=0,$ we get
		\begin{align}\label{log-000110}
			\frac{H'(z}{H(z)}=h'(z)=\frac{1+\epsilon(z)}{1-\epsilon(z)},\quad P_h(z)=\frac{2\epsilon'(z)}{1-\epsilon^2(z)}
		\end{align}
		and so
		\begin{align*}
			P_H(z)=h'(z)+P_h(z)=\frac{1+\epsilon(z)}{1-\epsilon(z)}+\frac{2\epsilon'(z)}{1-\epsilon^2(z)}.
		\end{align*}
		Therefore, $$P_f=\frac{2\epsilon'(z)}{1-\epsilon^2(z)}+\left(1+\omega(z)\right)\frac{1+\epsilon(z)}{1-\epsilon(z)}-\frac{\overline{\omega(z)}\omega'(z)}{1-|\omega(z)|^2}.$$
		Next by Schwarz Pick lemma, the pre-Schwarzian norm of $f$ is
		\begin{align*}
			\|P_f\|= & \sup\limits_{z\in\mathbb{D}}|P_f(z)|(1-|z|^2)\\ = 
			& \sup\limits_{z\in\mathbb{D}}\left|\frac{2\epsilon'(z)}{1-\epsilon^2(z)}+\left(1+\omega(z)\right)\frac{1+\epsilon(z)}{1-\epsilon(z)}-\frac{\overline{\omega(z)}\omega'(z)}{1-|\omega(z)|^2}\right|(1-|z|^2)\\
			\le & \sup\limits_{z\in\mathbb{D}}\left[2+2(1+|z|)^2+1\right]\\
			= & 11.
		\end{align*}
		To show that the estimate is sharp, we consider the function $f_t(z)=H(z)\overline{G(z)}=e^{h(z)}\overline{G(z)}$ where $h(z)=-z-2\log{(1-z)}$ and dilatation
		$$\omega_t(z)=\frac{t-z}{1-tz},~~t\in(0,1).$$
		It is easy to calculate that
		\begin{align}\label{log-000120}
			h'(z)=\frac{1+z}{1-z},\quad P_h(z)=\frac{2}{1-z^2}
		\end{align}
		and
		\begin{align}\label{log-000130}
			\frac{\overline{\omega_t(z)}\omega_t'(z)}{1-|\omega_t(z)|^2}=\frac{\overline{z}-t}{(1-tz)(1-|z|^2)},\quad 1+\omega_t(z)=\frac{(1+t)(1-z)}{1-tz}.
		\end{align}
		Since $f_t\in L_{\mathcal{R}}$ from Theorem \ref{log-000100} it follows that the pre-Schwarzian norm $\|P_{f_t}\|\le 11.$ We need to show that $\|P_{f_t}\|=11.$ From the hypothesis
		\begin{align}\label{log-000146}
			\|P_{f_t}\|= & \sup\limits_{z\in\mathbb{D}}\left|P_{f_t}\right|(1-|z|^2)\\\nonumber
			= & \sup\limits_{z\in\mathbb{D}}\left|P_H(z)+\omega_t(z)\frac{H'(z)}{H(z)}-\frac{\overline{\omega_t(z)}\omega_t'(z)}{1-|\omega_t(z)|^2}\right|(1-|z|^2)\\\nonumber
			= & \sup\limits_{z\in\mathbb{D}}\left|P_h(z)+\left(1+\omega_t(z)\right)h'(z)-\frac{\overline{\omega_t(z)}\omega_t'(z)}{1-|\omega_t(z)|^2}\right|(1-|z|^2)\\\nonumber
			= & \sup\limits_{z\in\mathbb{D}}\left|\frac{2}{1-z^2}+\frac{(1+t)(1+z)}{1-tz}+\frac{t-\overline{z}}{(1-tz)(1-|z|^2)}\right|(1-|z|^2).
		\end{align}
		Now the value of 
		\begin{align}\label{log-000150}
			N_t= & \sup\limits_{z\in(0,1)}\left|\frac{2}{1-z^2}+\frac{(1+t)(1+z)}{1-tz}+\frac{t-\overline{z}}{(1-tz)(1-|z|^2)}\right|(1-|z|^2)\\\nonumber        
			= &\sup\limits_{r\in(0,1)}\left|2+\frac{(1+t)(1+r)(1-r^2)}{1-tr}+\frac{t-r}{1-tr}\right|\\\nonumber
			= & \sup\limits_{0\le r<1}\left|2+\frac{(1+t)(1+r)(1-r^2)+(t-r)}{1-tr}\right|\\\nonumber
			= & \sup\limits_{0\le r<1}E(r,t)\le 11
		\end{align}
		where $$E(r,t)=\left|2+\frac{(1+t)(1+r)(1-r^2)+(t-r)}{1-tr}\right|$$ from which it is simple to find that
		$\lim\limits_{r\rightarrow 1}\lim\limits_{t\rightarrow 1} E(r,t)=11.$ Taking the limit $t$ tends to $1$ in the relation \eqref{log-000150} we get
		\begin{align}\label{e5}
			\lim\limits_{t\rightarrow 1}N_t=\lim\limits_{t\rightarrow 1}\sup\limits_{0\le r<1} E(r,t)\le 11.
		\end{align}
		For $t\in(0,1)$ it is clear that
		$$E(r,t)\le \sup\limits_{0\le r<1} E(r,t).$$
		Therefore
		\begin{align*}
			\lim\limits_{t\rightarrow 1} E(r,t)\le\lim\limits_{t\rightarrow 1}\sup\limits_{0\le r<1} E(r,t)\le 11. 
		\end{align*}
		Next, we note that the quantity $\lim\limits_{t\rightarrow 1}\sup\limits_{0\le r<1} E(r,t)$ is constant that is independent of $r$ and $\lim\limits_{r\rightarrow 1}\lim\limits_{t\rightarrow 1} E(r,t)=11.$ Thus
		\begin{align}
			11=\lim\limits_{r\rightarrow 1}\lim\limits_{t\rightarrow 1} E(r,t)\le\lim\limits_{t\rightarrow 1}\sup\limits_{0\le r<1} E(r,t)\le 11
		\end{align}
		From which it follows that $\lim\limits_{t\rightarrow 1}\sup\limits_{0\le r<1} E(r,t)=11.$ Next, the relation \eqref{e5} implies that 
		$$\lim\limits_{t\rightarrow 1}N_t=11.$$
		and so from \eqref{log-000146} and \eqref{log-000150} it follows that 
		$$11=\lim\limits_{t\rightarrow 1}N_t\le\lim\limits_{t\rightarrow 1}\|P_{f_t}\|\leq 11.$$ 
		Thus finally we get $\lim\limits_{t\rightarrow 1}\|P_{f_t}\|=11.$ This completes the proof. 
	\end{proof}
	
	Our next result provides bound of $|f(z)|$ for a logharmonic mapping $f\in L_{\mathcal{R}}.$
	
	\begin{thm}\label{log-000200}
		Let $f\in L_{\mathcal{R}}$ be a sense-preserving logharmonic mapping with dilatation $\omega.$ Then for $|z|=r$ the sharp inequalities
		\begin{align}\label{log-000220}
			|f(z)|\le
			\begin{cases}
				\frac{e^{-r/\alpha(1+\alpha)}}{(1-r)^4} e^{(1/\alpha-\alpha)^2\log{(1+\alpha r)}} ,~~\text{when}~~|\omega(0)|=\alpha\neq 0 \\
				\frac{e^{-3r-r^2/2}}{(1-r)^4},\quad\quad\quad\quad\quad\quad\quad\quad\text{when}~~\omega(0)=0
			\end{cases}
		\end{align}
		hold.
	\end{thm}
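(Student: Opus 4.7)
The plan is to factor $|f(z)|=|H(z)|\,|G(z)|=e^{\operatorname{Re}h(z)}|G(z)|$ and bound each factor separately by integrating along the radial segment from $0$ to $z=re^{i\theta}$. Since $h\in\mathcal{R}$, the derivative $h'$ is a Carath\'eodory function (analytic with $h'(0)=1$ and $\operatorname{Re}h'>0$), so $|h'(z)|\le(1+|z|)/(1-|z|)$. Integrating along the radius and using $h(0)=0$ gives $|h(z)|\le -r-2\log(1-r)$, hence
\[
|H(z)|=e^{\operatorname{Re}h(z)}\le e^{|h(z)|}\le\frac{e^{-r}}{(1-r)^2}.
\]

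For $|G|$, I would rearrange the dilatation identity $\omega=G'/(Gh')$ as $(\log G)'=\omega h'$. Since $G$ is nonvanishing with $G(0)=1$, radial integration gives $\log G(z)=\int_0^z\omega(\zeta)h'(\zeta)\,d\zeta$. Using Schwarz--Pick, $|\omega(z)|\le(|z|+\alpha)/(1+\alpha|z|)$ when $\alpha=|\omega(0)|\ne 0$ (respectively $|\omega(z)|\le|z|$ by Schwarz when $\alpha=0$), together with the bound on $|h'|$, I obtain
\[
\log|G(z)|\le\int_0^r\frac{(t+\alpha)(1+t)}{(1+\alpha t)(1-t)}\,dt,
\]
with the integrand replaced by $t(1+t)/(1-t)$ in the case $\alpha=0$.

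It then remains to evaluate these integrals. The $\alpha=0$ case is direct: $\int_0^r t(1+t)/(1-t)\,dt=-2r-r^2/2-2\log(1-r)$, and combining with the bound on $|H|$ yields $|f(z)|\le e^{-3r-r^2/2}/(1-r)^4$. For $\alpha\ne 0$ I would apply partial fractions, e.g.\
\[
\frac{(t+\alpha)(1+t)}{(1+\alpha t)(1-t)}=-\frac{1}{\alpha}+\frac{2}{1-t}+\frac{(\alpha-1)^2/\alpha}{1+\alpha t},
\]
integrate term-by-term, exponentiate, and multiply by the bound on $|H|$ to recover the first line of \eqref{log-000220}. For sharpness, I would test the extremals $h(z)=-z-2\log(1-z)$ (which makes $h'(z)=(1+z)/(1-z)$ attain equality in the Carath\'eodory bound along the positive real axis) and $\omega_\alpha(z)=(z+\alpha)/(1+\alpha z)$ (the Blaschke factor attaining equality in Schwarz--Pick); at $z=r\in(0,1)$ every triangle inequality along the radial path becomes an equality, so the corresponding $f=H\overline G\in L_{\mathcal R}$ realizes the stated bound.

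The main technical obstacle is the $\alpha\ne 0$ bookkeeping: the partial-fraction expansion must be combined with the $|H|$-bound and algebraically rearranged to match the somewhat unusual closed form in \eqref{log-000220}. A useful consistency check is that the $\alpha\to 0$ limit of the $\alpha\ne 0$ expression should reproduce the $\omega(0)=0$ bound, since the $(\alpha-1)^2/\alpha^2$ prefactor of $\log(1+\alpha r)$ and the exponent $-r(1+\alpha)/\alpha$ both develop $1/\alpha$ singularities that must cancel.
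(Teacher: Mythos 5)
Your proposal is correct and follows essentially the same route as the paper: write $\log G(z)=\int_0^z\omega(\zeta)h'(\zeta)\,d\zeta$, bound $|\omega|$ by Schwarz--Pick and $|h'|,|h|$ by the growth estimates for $h\in\mathcal{R}$, evaluate the resulting radial integral, and verify sharpness with $h(z)=-z-2\log(1-z)$ together with the extremal dilatations. Note only that your (correct) partial-fraction computation yields the exponent $\left(\frac{1-\alpha}{\alpha}\right)^2\log(1+\alpha r)$, which agrees with the paper's intermediate display but not with the $(1/\alpha-\alpha)^2$ appearing in the final statement of \eqref{log-000220}; the latter appears to be a typographical slip in the paper rather than a defect of your argument.
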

	\begin{proof}
		A logharmonic mapping $f\in L_{\mathcal{R}}$ has the form $f(z)=e^{h(z)}\overline{G(z)},~~h\in\mathcal{R}$ with dilatation $\omega:\mathbb{D}\rightarrow\mathbb{D}$ such that
		$$\omega(z)=\frac{G'(z)}{G(z)h'(z)}.$$
		Now some easy manipulations provide that
		\begin{align}\label{log-000250}
			G(z)=e^{\int_{0}^{z}\omega(\zeta)h'(\zeta)d\zeta}.
		\end{align}
		Thus for $|z|=r<1$ we get
		\begin{align}\label{log-000280}
			|G(z)|\le e^{\int_{0}^{r}|\omega(\zeta)||h'(\zeta)|d|\zeta|}.
		\end{align}
	    It is well known that for analytic function $\omega:\mathbb{D}\rightarrow\mathbb{D}$ with $|\omega(0)|=\alpha$ the inequality
		\begin{align}|\label{log-000300}
			\omega(z)|\le \frac{\alpha+|z|}{1+\alpha|z|}
		\end{align}	
		and for analytic function $h\in\mathcal{R}$ the inequalities
		\begin{align}\label{log-000320}
			|h(z)|\le -|z|-2\log{(1-|z|)}\quad\text{and}\quad	|h'(z)|\le \frac{1+|z|}{1-|z|}
		\end{align}	
		hold. For $\alpha\neq 0$, using the bounds \eqref{log-000300} and \eqref{log-000320} in \eqref{log-000280}, we get
		\begin{align}\label{log-000340}
			|G(z)|\le & \exp\left({\int_{0}^{r}\frac{(\alpha+t)(1+t)}{(1+\alpha t)(1-t)}dt}\right)\\\nonumber
			= & \exp\left(\left(\frac{1-\alpha}{\alpha}\right)^2\log{(1+\alpha r)}-2\log{(1-r)}-\frac{r}{\alpha}\right)\\\nonumber
			= & \frac{e^{-r/\alpha}}{(1-r)^2}e^{(1/\alpha-\alpha)^2\log{(1+\alpha r)}}.
		\end{align}
	    Using \eqref{log-000320} and \eqref{log-000340}, we get
		$$|f(z)|\le e^{|h(z)|}|G(z)|\le \frac{e^{-r/\alpha(1+\alpha)}}{(1-r)^4}e^{(1/\alpha-\alpha)^2\log{(1+\alpha r)}}.$$
		
		On the other hand, when $\omega(0)=0,$ the inequality \eqref{log-000300} becomes $|\omega(z)|\le |z|$ and so \eqref{log-000340} becomes
		\begin{align*}
			|G(z)|\le & \exp\left({\int_{0}^{r}\frac{t(1+t)}{1-t}dt}\right)\\\nonumber
			= & \exp\left(\frac{-(4r+r^2)}{2}-2\log{(1-r)}\right)\\\nonumber
			= & \frac{e^{-2r-r^2/2}}{(1-r)^2}.
		\end{align*}
		and this, further, implies that
		$$|f(z)|\le e^{|h(z)|}|G(z)|\le \frac{e^{-3r-r^2/2}}{(1-r)^4}.$$
		
		To show that the estimates are sharp, we consider the logharmonic mapping $f(z)=e^{h(z)}\overline{G(z)}$ where $h(z)=-z-2\log{(1-z)}$ and dilatation $\omega(z).$ Then the first estimate is sharp for $\omega(z)=(\alpha+z)/(1+\alpha z)$ and the second estimate is sharp for $\omega(z)=z.$
	\end{proof}
	\begin{rem}\label{log-000370}
		From \eqref{log-000250}, we note that for every $f(z)=e^{h(z)}\overline{G(z)}\in L_{\mathcal{R}}$ the analytic function $G$ has the form $G=e^g$ where $g$ is analytic in $\mathbb{D}$ and having the form
		\begin{align*}
			g(z)=\sum\limits_{n=1}^\infty b_n z^n.
		\end{align*}
		Thus every $f\in L_{\mathcal{R}}$ can be expressed as $f=e^h\overline{e^g}$ where
		\begin{align}\label{log-000375}
			h(z)=z+\sum\limits_{n=2}^\infty a_n z^n\in\mathcal{R}\quad\text{and}\quad g(z)=\sum\limits_{n=1}^\infty b_n z^n.
		\end{align}
	\end{rem}

	Before going to our next result, we recall the definition of pre-Schwarzian norm of a harmonic mapping given by Hern{\'a}ndez and Mart{\'\i}n \cite{Hernandez-Martin-2015}. The pre-Schwarzian norm of a locally univalent harmonic mapping $f=h+\overline{g}$ with dilatation $\omega=g'/h'$ is defined by
	\begin{align*}
		||P_f|| = \sup_{z \in \mathbb{D}}(1-|z|^2)\left|\frac{h''}{h'}-\frac{\overline{\omega}\omega'}{1-|\omega|^2}\right|.
	\end{align*}
	The study of pre-Schwarzian norm has many application in geometric function theory to obtain different univalence criteria. In this regard, we collect the following result by Liu and Ponnusamy \cite{Liu-Ponnusamy--2018}.
	\begin{lem}\label{log-000720}
	Let $f= h+\overline{g}$ be a sense-preserving harmonic mapping in the unit disk $\mathbb{D}$. Then $\|P_f\|<\infty$ if and only if $f$ is uniformly locally univalent.
	\end{lem}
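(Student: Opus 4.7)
The plan is to prove both directions via the ``pull-back to a hyperbolic disk'' technique that Yamashita used in the analytic case, adapted to the harmonic setting. The unifying tool is the transformation rule for $P_f$ under Möbius precomposition: for a disk automorphism $\phi_b(z)=(z+b)/(1+\bar{b}z)$ and $f=h+\overline{g}$ sense-preserving harmonic, the composition $f\circ\phi_b$ is again harmonic with dilatation $\omega\circ\phi_b$, and a direct computation mirroring the analytic case yields
\begin{equation*}
P_{f\circ\phi_b}(z)=P_{\phi_b}(z)+P_f(\phi_b(z))\,\phi_b'(z).
\end{equation*}
Combined with the Möbius identity $(1-|z|^2)|\phi_b'(z)|=1-|\phi_b(z)|^2$ and the elementary bound $\|P_{\phi_b}\|\le 4$, this gives $\|P_{f\circ\phi_b}\|\le\|P_f\|+4$ for every $b\in\mathbb{D}$.

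For the forward direction, assume $\|P_f\|=C<\infty$. Fix $b\in\mathbb{D}$, set $F_b=f\circ\phi_b$ (so $\|P_{F_b}\|\le C+4$), and for $r\in(0,1)$ consider $G_r(z):=F_b(rz)$. A short computation gives $P_{G_r}(z)=rP_{F_b}(rz)$, and combining $|P_{F_b}(w)|\le (C+4)/(1-|w|^2)$ with $\sup_{|w|<r}(r^2-|w|^2)/(1-|w|^2)\le r^2$ yields $\|P_{G_r}\|\le r(C+4)$. Invoking the harmonic analogue of Becker's univalence criterion (Hern\'andez--Mart\'\i n), there exists a universal constant $c_0>0$ such that $\|P_{G_r}\|\le c_0$ forces $G_r$ to be univalent on $\mathbb{D}$. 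Choosing $r$ independent of $b$ with $r(C+4)\le c_0$ therefore gives univalence of $F_b$ on $\{|z|<r\}$, i.e.\ univalence of $f$ on the hyperbolic disk of radius $\operatorname{arctanh}(r)$ about $b$. Since $b$ was arbitrary, $f$ is uniformly locally univalent.

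For the converse, assume $f$ is uniformly locally univalent with hyperbolic radius $\rho$, and set $r=\tanh\rho$. For each $b\in\mathbb{D}$, $F_b=f\circ\phi_b$ is univalent on $\{|z|<r\}$, so $H_b(z):=F_b(rz)$ is univalent and harmonic on all of $\mathbb{D}$. An affine normalisation $\tilde{H}_b(z)=(H_b(z)-H_b(0))/h'_{H_b}(0)$ places $\tilde{H}_b$ in the Clunie--Sheil-Small class $S_H$, and the known bound on its second Taylor coefficient, together with the Schwarz--Pick estimate $|\overline{\omega(0)}\omega'(0)|/(1-|\omega(0)|^2)\le|\omega(0)|\le 1$ applied to the second term of the harmonic pre-Schwarzian, yields a universal bound $|P_{H_b}(0)|\le M$. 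Since $P_{H_b}(0)=rP_{F_b}(0)$ and the composition identity at $z=0$ reads $P_{F_b}(0)=-2\bar{b}+(1-|b|^2)P_f(b)$, one obtains $(1-|b|^2)|P_f(b)|\le M/r+2$ for every $b\in\mathbb{D}$, whence $\|P_f\|<\infty$.

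The principal technical obstacles are the two analytic inputs borrowed from the literature: the Becker-type univalence criterion for harmonic mappings (whose standard formulation carries an auxiliary term in $\omega'$ that must be absorbed using Schwarz--Pick), and the universal bound $|P_H(0)|\le M$ for univalent harmonic $H$ on $\mathbb{D}$, which ultimately rests on Clunie--Sheil-Small-type coefficient estimates for $S_H$. By contrast, the remaining pieces — verifying the composition formula in the harmonic setting, the scaling identity $P_{G_r}(z)=rP_{F_b}(rz)$, and the bound $\|P_{\phi_b}\|\le 4$ — are routine bookkeeping.
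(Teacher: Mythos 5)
The paper does not actually prove this lemma: it is quoted verbatim from Liu and Ponnusamy \cite{Liu-Ponnusamy--2018} (``we collect the following result\dots''), so there is no in-paper argument to compare yours against. Judged on its own terms, your proof is the natural harmonic adaptation of Yamashita's argument and is essentially the route taken in the cited source: the composition rule $P_{f\circ\phi}=P_f(\phi)\phi'+P_\phi$, the bound $\|P_{f\circ\phi_b}\|\le\|P_f\|+4$, the scaling identity $P_{G_r}(z)=rP_{F_b}(rz)$, and the converse via normalising $F_b(rz)$ into $S_H$ and invoking the universal bound on the second coefficient together with $P_{F_b}(0)=-2\bar b+(1-|b|^2)P_f(b)$ are all correct.

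The one step you should not leave as stated is the invocation of ``a universal constant $c_0$ such that $\|P_{G_r}\|\le c_0$ forces univalence.'' No such norm-only criterion is available for harmonic mappings: the Hern\'andez--Mart\'\i n Becker-type condition requires bounding $|z|(1-|z|^2)|P_{G_r}(z)|+|z|(1-|z|^2)|\omega_{G_r}'(z)|/(1-|\omega_{G_r}(z)|^2)$ by $1$, and the crude Schwarz--Pick estimate applied to $\omega_{G_r}$ itself only bounds the second summand by $|z|\le 1$, which cannot be made small by shrinking $\|P_{G_r}\|$. The absorption you allude to does work, but only because $\omega_{G_r}(z)=\omega_{F_b}(rz)$, so applying Schwarz--Pick to $\omega_{F_b}$ at the point $rz$ (which lies in $|w|<r$) gives
\begin{equation*}
\frac{(1-|z|^2)\,|\omega_{G_r}'(z)|}{1-|\omega_{G_r}(z)|^2}
=\frac{r(1-|z|^2)\,|\omega_{F_b}'(rz)|}{1-|\omega_{F_b}(rz)|^2}
\le\frac{r(1-|z|^2)}{1-r^2|z|^2}\le r,
\end{equation*}
so that both summands are $O(r)$ and the criterion applies once $r(C+4)+r\le 1$. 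With that computation inserted, the forward direction is complete; the rest of your argument stands.
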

	We use this result to generate a family of uniformly locally univalent harmonic mappings.
	\begin{thm}\label{log-000380}
		Let  $f\in L_{\mathcal{R}}$ be sense-preserving logharmonic mapping in $\mathbb{D}.$ Then $\log{f}$ is locally uniformly univalent in $\mathbb{D}.$ 
	\end{thm}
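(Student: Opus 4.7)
The plan is to exploit Lemma \ref{log-000720}: since $\log f$ will turn out to be a sense-preserving harmonic mapping, showing that its pre-Schwarzian norm is finite will immediately give uniform local univalence. So the target reduces to bounding $\|P_{\log f}\|$.

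First I would set up the harmonic representation. For $f=e^{h}\overline{G}\in L_{\mathcal{R}}$, Remark \ref{log-000370} allows us to write $G=e^{g}$ with $g$ analytic on $\mathbb{D}$, so
\[
\log f(z)=h(z)+\overline{g(z)}.
\]
This is harmonic, and its dilatation is $g'/h'$, which coincides with the dilatation $\omega=G'/(Gh')$ of $f$; in particular $|\omega|<1$ on $\mathbb{D}$, so $\log f$ is sense-preserving and locally univalent. Hence the pre-Schwarzian derivative (in the Hern\'andez--Mart\'\i n sense) reads
\[
P_{\log f}(z)=\frac{h''(z)}{h'(z)}-\frac{\overline{\omega(z)}\,\omega'(z)}{1-|\omega(z)|^2}.
\]

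Next I would bound each piece separately. Using that $h\in\mathcal{R}$, I would invoke the parametrization $h'=(1+\epsilon)/(1-\epsilon)$ already used in the proof of Theorem \ref{log-000100}, giving $h''/h'=2\epsilon'/(1-\epsilon^{2})$ with $\epsilon:\mathbb{D}\to\mathbb{D}$, $\epsilon(0)=0$. The Schwarz--Pick inequality $|\epsilon'(z)|\le(1-|\epsilon(z)|^{2})/(1-|z|^{2})$ combined with the elementary estimate $|1-\epsilon^{2}(z)|\ge 1-|\epsilon(z)|^{2}$ yields
\[
(1-|z|^{2})\left|\frac{h''(z)}{h'(z)}\right|\le 2.
\]
A second application of Schwarz--Pick, this time to $\omega$, gives $|\omega'(z)|\le(1-|\omega(z)|^{2})/(1-|z|^{2})$, hence
\[
(1-|z|^{2})\left|\frac{\overline{\omega(z)}\,\omega'(z)}{1-|\omega(z)|^{2}}\right|\le|\omega(z)|<1.
\]
Adding the two contributions, $\|P_{\log f}\|\le 3<\infty$.

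With $\|P_{\log f}\|$ finite, Lemma \ref{log-000720} finishes the proof: $\log f$ is uniformly locally univalent on $\mathbb{D}$. I do not expect any real obstacle; the only delicate point is noticing that the harmonic pre-Schwarzian of $\log f$ strips off the $g'/g$ term present in $P_f$ (because $\log f$ has analytic part $h$ and co-analytic part $g$, not $\log G$), which makes both summands automatically controllable by Schwarz--Pick without needing any growth estimates on $g$ itself.
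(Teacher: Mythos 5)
Your proof is correct and follows essentially the same route as the paper: write $\log f=h+\overline{g}$ as a harmonic mapping with the same dilatation $\omega$, bound its Hern\'andez--Mart\'\i n pre-Schwarzian norm by $3$ via Schwarz--Pick applied to $\epsilon$ and $\omega$, and conclude by Lemma \ref{log-000720}. The only (harmless) difference is that you bound the dilatation term by $|\omega(z)|<1$ directly, whereas the paper uses $|z|/(1-|z|^2)$; both give the same norm bound.
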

	\begin{proof}
		Since $f\in L_{\mathcal{R}}$ has the form $f=e^h\overline{G}$ with $h\in\mathcal{R}$ and dilatation $\omega=G'/(Gh'),$ it follows that $\log{f}=h+\overline{\log{G}}=F$ is harmonic mapping having same dilatation $\omega.$ Now, from \eqref{log-000110} and the Schwarz Pick lemma, the pre-Schwarzian norm of $F$ is
		\begin{align*}
			\|P_F\|= & \sup\limits_{z\in\mathbb{D}}(1-|z|^2)|P_F(z)|\\
			= & \sup\limits_{z\in\mathbb{D}}(1-|z|^2)\left|P_h(z)-\frac{\omega'(z)\overline{\omega(z)}}{1-|\omega(z)|^2}\right|\\
			\le & \sup\limits_{z\in\mathbb{D}}(1-|z|^2)\left(\frac{2}{1-|z|^2}+\frac{|z|}{1-|z|^2}\right)\\
			= & 3.
		\end{align*}
		So $F=\log{f}$ is uniformly locally univalent by Lemma \ref{log-000720}.
	\end{proof}
	
	We have seen that functions $f\in L_{\mathcal{R}}$ are non vanishing in $\mathbb{D}$. Here, we use a transformation 
	$$f\rightarrow zf$$ 
	for $f\in L_{\mathcal{R}}$ to generate a family $L^0_{\mathcal{R}}$ of mappings $zf\in C^1(\mathbb{D})$ that fixes origin. Thus every $f\in L^0_{\mathcal{R}}$ has the form $f=zF(z)$ with $F=e^h\overline{e^g}\in L_{\mathcal{R}}$ and dilatation of $f$ is
	\begin{align}\label{log-000400}
		\omega(z)=\frac{zg'(z)}{(1+zh'(z))}
	\end{align}
	where the analytic functions $h,~g$ are given by \eqref{log-000375}. It is important to note that the dilatation $\omega$ of $f\in L^0_{\mathcal{R}}$ follows the property that $\omega(0)=0.$ Now, we are in a position to discuss starlikeness of mappings in the class $L^0_{\mathcal{R}}$. A function $f\in L^0_{\mathcal{R}}$ is called starlike in $\mathbb{D}$ if the image domain $f(\mathbb{D})$ is starlike with respect to origin. A function $f\in L^0_{\mathcal{R}}$ is called fully starlike if $f(\mathbb{D}_r)$ is starlike for each $r\in(0,1)$ where $\mathbb{D}_r=\{z\in\mathbb{C}:|z|<r\}.$ A function is hereditarily starlike if it is fully starlike and univalent. Putting $\lambda=0$ in \cite[Lemma 2.1]{Ma-Ponnusamy-Sugawa-2022}, we get the following definition of a hereditarily starlike function.
	\begin{definition}
		Suppose that a function $f\in C^1(\mathbb{D})$ satisfies the condition that $f(z)=0$ if and only if $z=0,$ and that $J_f=|f_z|^2-|f_{\overline{z}}|^2>0$ on $\mathbb{D}.$ Then $f$ is one-one in $\mathbb{D}$ and $f(\mathbb{D}_r)$ is starlike for each $0<r<1$ if and only if
		\begin{align*}
			{\rm Re\,}\left(\frac{Df(z)}{f(z)}\right)>0,\quad z\in\mathbb{D}\setminus\{0\}.
		\end{align*}
	\end{definition}

	\begin{example}
		There are logharmonic mappings $f\in L^0_{\mathcal{R}}$ which are not hereditarily starlike in the unit disk $\mathbb{D}.$
		
		Let us consider a logharmonic mapping $f(z)=ze^{h(z)}\overline{e^{g(z)}}$ where $h(z)=\log{(1+z)}$ and the dilatation $\omega(z)=-z.$ Here, $h'(z)=1/(1+z)$ and so $h(0)=h'(0)-1=0$, and ${\rm Re\,}h'(z)$ is positive in the unit disk $\mathbb{D}$. This implies that $f\in L^0_{\mathcal{R}}.$ Next, from \eqref{log-000400} it is follows that 
		\begin{align*}
			{\rm Re\,}\frac{Df(z)}{f(z)}= {\rm Re\,}\left[(1-\omega(z))(1+zh'(z))\right]={\rm Re\,}(1+2z)                      
		\end{align*}
		which is non positive in $\mathbb{D}\cap\{z\in\mathbb{C}:-1<{\rm Re\,}z<-1/2\}.$
	\end{example}
	Our next result provides sufficient condition under which a logharmonic mapping $f\in L^0_{\mathcal{R}}$ is hereditarily starlike.
	\begin{thm}\label{p6-100650}
		Let $f=ze^{h(z)}\overline{e^{g(z)}}\in L^0_{\mathcal{R}}$ be a sense-preserving logharmonic mapping in $\mathbb{D}$ of the form \eqref{log-000375} such that 
		\begin{align}\label{p6-100700}
			|1-b_1|+\sum\limits_{n=2}^\infty n|a_n-b_n|\leq 1.
		\end{align}
		Then $f$ is fully starlike in $\mathbb{D}.$
	\end{thm}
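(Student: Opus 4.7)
The plan is to verify the criterion in the Definition immediately preceding the theorem. The two preconditions are immediate: since $f(z) = ze^{h(z)}\overline{e^{g(z)}}$ and the exponential factors never vanish, $f(z)=0$ holds exactly at $z=0$; and Jacobian positivity $J_f>0$ is built into the sense-preserving hypothesis. Thus the task reduces to showing $\mathrm{Re}(Df(z)/f(z)) > 0$ for every $z \in \mathbb{D}\setminus\{0\}$, where $Df = zf_z - \bar{z}f_{\bar{z}}$.

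First I would compute $Df/f$ explicitly. Writing $H(z) = ze^{h(z)}$ and $G(z) = e^{g(z)}$ so that $f = H\overline{G}$, direct differentiation gives $zH'/H = 1 + zh'(z)$ and $zG'/G = zg'(z)$, whence
\[
\frac{Df(z)}{f(z)} = \frac{zH'(z)}{H(z)} - \overline{\frac{zG'(z)}{G(z)}} = \bigl(1 + zh'(z)\bigr) - \overline{zg'(z)}.
\]
Since $\mathrm{Re}(\bar{w}) = \mathrm{Re}(w)$ this yields the clean identity
\[
\mathrm{Re}\,\frac{Df(z)}{f(z)} = \mathrm{Re}\bigl(1 + zh'(z) - zg'(z)\bigr),
\]
consistent with the formula $\mathrm{Re}[(1-\omega)(1+zh')]$ appearing in the preceding example once one substitutes $\omega = zg'/(1+zh')$ from \eqref{log-000400}.

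Next I would insert the series expansions from \eqref{log-000375} to obtain
\[
1 + zh'(z) - zg'(z) = 1 + (1-b_1)z + \sum_{n=2}^{\infty} n(a_n - b_n)z^n,
\]
and close with a triangle-inequality estimate. For $|z|=r<1$, every tail term carries at least a factor of $|z|$, so
\[
\Bigl|(1-b_1)z + \sum_{n=2}^{\infty} n(a_n-b_n)z^n\Bigr| \leq r\left(|1-b_1| + \sum_{n=2}^{\infty}n|a_n-b_n|\right) \leq r,
\]
using the hypothesis \eqref{p6-100700}. Hence $\mathrm{Re}(Df/f) \geq 1 - r > 0$ throughout $\mathbb{D}\setminus\{0\}$, and the Definition delivers full (indeed hereditary) starlikeness.

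There is no genuine obstacle here: the proof is an unpacking of series expansions plus one triangle inequality. The only mild bookkeeping point is the asymmetry at $n=1$, where $1-b_1$ replaces $a_1-b_1$; this is forced by the normalization $h'(0)=1$ while $g$ carries no analogous constraint, and is exactly the reason the hypothesis isolates the $|1-b_1|$ term.
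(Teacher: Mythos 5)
Your proposal is correct and follows essentially the same route as the paper: compute $\mathrm{Re}(Df/f)=\mathrm{Re}(1+zh'(z)-zg'(z))$, expand in the power series of \eqref{log-000375}, and apply the triangle inequality together with $|z|<1$ and hypothesis \eqref{p6-100700} to conclude positivity. Your explicit verification of the preconditions of the Definition and the cross-check against the formula $\mathrm{Re}[(1-\omega)(1+zh')]$ are welcome additions but do not change the argument.
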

	\begin{proof}
		For $z\in\mathbb{D}\setminus\{0\},$ we have 
		\begin{align*}
			{\rm Re\,}\left(\frac{Df(z)}{f(z)}\right)=&{\rm Re\,}\left(1+zh'(z)-\overline{zg'(z)}\right)\\
			= & 1+{\rm Re\,}\left(zh'(z)-zg'(z)\right)\\
			\ge & 1-\left|zh'(z)-zg'(z)\right|\\
			=& 1-\left|(1-b_1)z+\sum\limits_{n=2}^\infty n(a_n-b_n)z^n\right|\\
			\ge & 1-|1-b_1||z|-\sum\limits_{n=2}^\infty n\left|a_n-b_n\right||z|^n\\
			=&1-|z|\left(|1-b_1|+\sum\limits_{n=2}^\infty n\left|a_n-b_n\right||z|^{n-1}\right)\\
			> & 1-\left(|1-b_1|+\sum\limits_{n=2}^\infty n\left|a_n-b_n\right|\right).
		\end{align*}
		Thus ${\rm Re\,}\left(\frac{Df(z)}{f(z)}\right)>0$ if \eqref{p6-100700} holds and so $f$ is hereditarily starlike.
	\end{proof}
	Theorem \ref{p6-100650} can be used to construct logharmonic hereditarily starlike functions in the class $L^0_{\mathcal{R}}.$ For $\alpha\in[0,1]$ let us consider logharmonic mappings $f_\alpha(z)=ze^{h(z)}\overline{e^{g(z)}}\in L^0_{\mathcal{R}}$ with $h(z)=z+\alpha z^2/2$ and dilatation $\omega(z)=z.$ Here, we find that $g(z)=z+z^2/2+\alpha z^3/3.$ Next, it is easy to verify that the condition \eqref{p6-100700} hold for every $\alpha\in[0,1]$ from which it follows that $f_\alpha$ are hereditarily starlike. Here, we present images of $\mathbb{D}$ under $f_\alpha$ for different values of $\alpha$, (generated using wolfram mathematica).
		\begin{figure}[H]
				\subfloat[$\alpha=0.2$\label{sp3}]{%
						\includegraphics[width=0.43\textwidth, height=7cm]
						{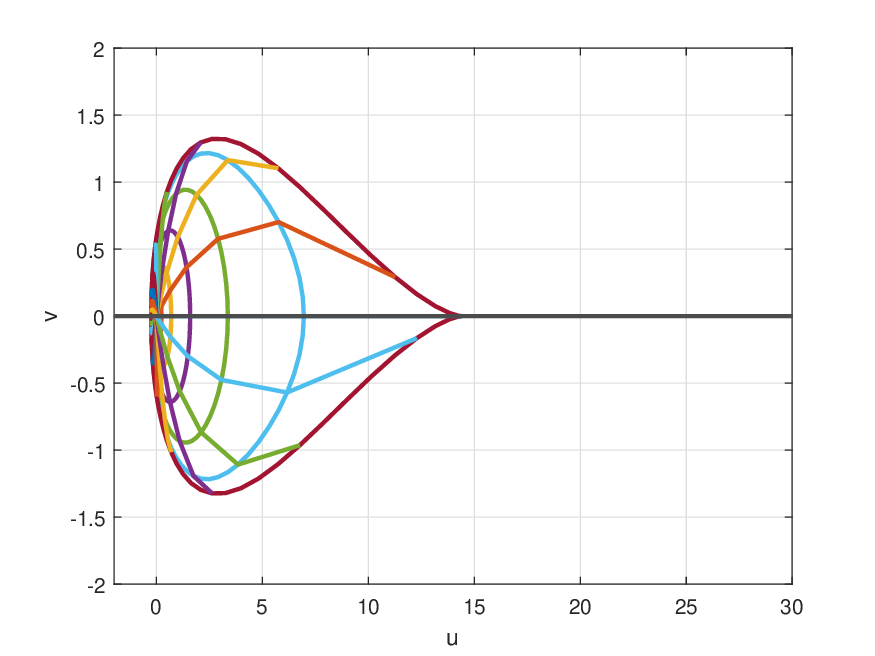}}
				\hspace{.2cm}
				\subfloat[$\alpha=0.6$\label{sp5}]{%
						\includegraphics[width=0.43\textwidth, height=7cm]
						{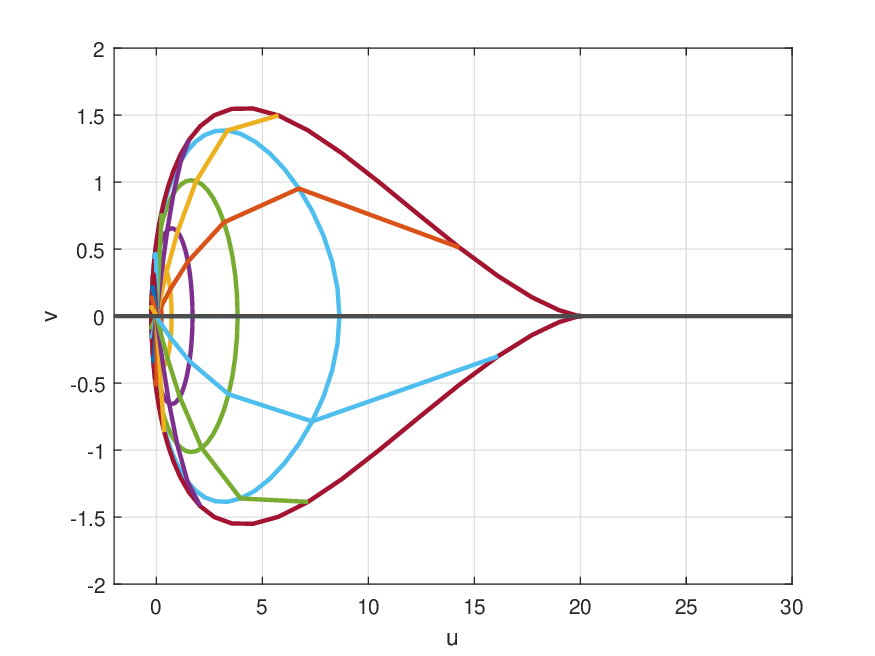}}
				\\
				\vspace{.2cm}
				\subfloat[$\alpha=0.8$\label{sp8}]{%
						\includegraphics[width=0.43\textwidth, height=7cm]
						{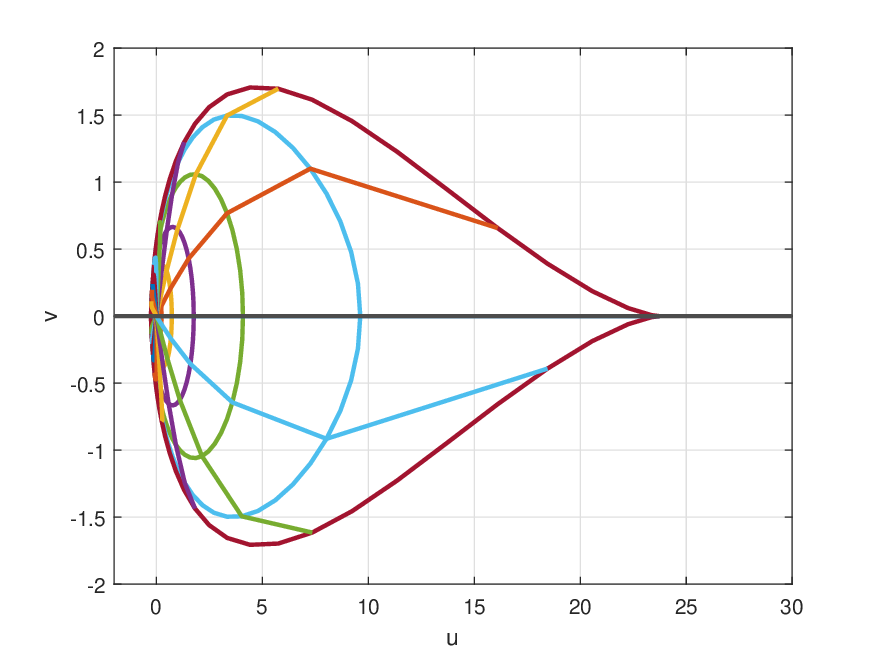}}
				\hspace{.2cm}
				\subfloat[$\alpha=1$\label{sp0}]{%
						\includegraphics[width=0.43\textwidth, height=7cm]
						{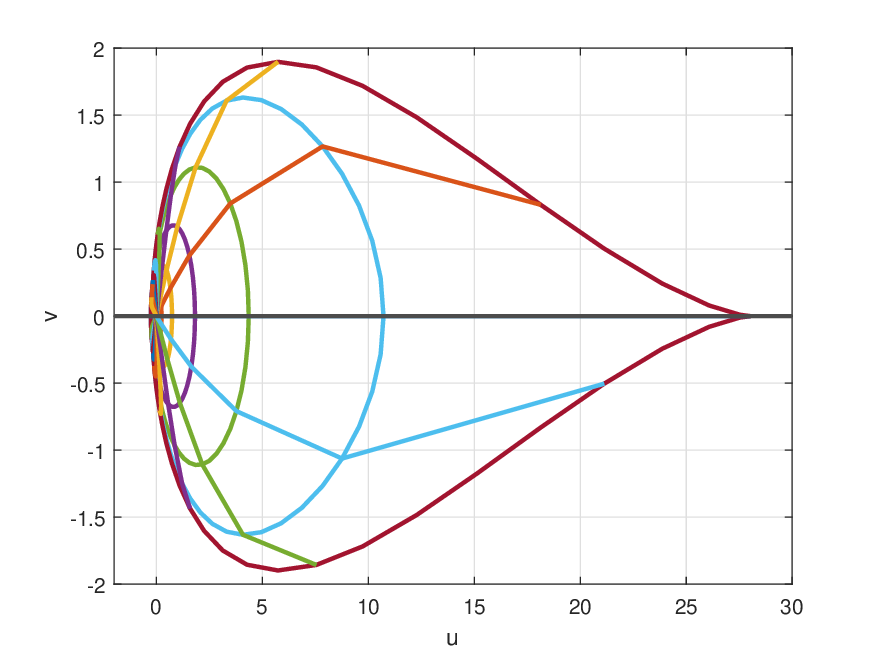}}
				\caption{$f_\alpha(\mathbb{D})$ for certain values of $\alpha.$}
				\label{fig-1}
			\end{figure}

	\section*{Declarations:}

	\noindent\textbf{Data availability:}
	Data sharing not applicable to this article as no data sets were generated or analyzed during the current study.\\

	\noindent\textbf{Acknowledgement:}
	The author thanks the Department of Science and Technology, Ministry of Science and Technology, Government of India
	for the financial support through DST-INSPIRE Fellowship (No. DST/INSPIRE Fellowship/2018/IF180967).\\
	
	\noindent\textbf{Conflict of interest:} The author declares that he has no conflict of interest.

\end{document}